\newcommand\RR{{\mathcal R}}
\newcommand\RX{{\mathcal X}}
\newcommand\Z{{\mathbb Z}}
\newcommand\C{{\mathbb C}}
\newcommand\Q{{\mathbb Q}}
\DeclareMathOperator\Aut{Aut}
\DeclareMathOperator\Out{Out}
\DeclareMathOperator\Mod{Mod}
\DeclareMathOperator\Inn{Inn}
\DeclareMathOperator\Hom{Hom}
\DeclareMathOperator\PGL{PGL}
\DeclareMathOperator\GL{GL}
\begin{document}

\newtheorem{theorem}{Theorem}[section]
\newtheorem{prop}[theorem]{Proposition}
\newtheorem{lemma}[theorem]{Lemma}
\newtheorem{cor}[theorem]{Corollary}

\title[Representations of surface groups with finite orbits]{Representations of surface 
groups with finite mapping class group orbits}

\author{Indranil Biswas}

\address{School of Mathematics, Tata Institute of Fundamental Research, Homi 
Bhabha Road, Mumbai 400005, India}

\email{indranil@math.tifr.res.in}

\author{Thomas Koberda}

\address{Department of Mathematics, University of Virginia, Charlottesville, VA 
22904-4137, USA}

\email{thomas.koberda@gmail.com}

\author{Mahan Mj}

\address{School of Mathematics, Tata Institute of Fundamental Research, Homi 
Bhabha Road, Mumbai 400005, India}

\email{mahan@math.tifr.res.in}

\author{Ramanujan Santharoubane}

\address{Department of Mathematics, University of Virginia, Charlottesville, VA 
22904-4137, USA}

\email{ramanujan.santharoubane@gmail.com}

\subjclass[2000]{Primary: 57M50; Secondary: 57M05, 20E36, 20F29}

\keywords{Representation variety; surface group; mapping class group; character variety}

\date{\today}

\begin{abstract}
Let $(S,\, \ast)$ be a closed oriented surface with a marked point, let $G$ be a fixed group, and
let $\rho\colon\pi_1(S) \longrightarrow G$ be a representation such that
the orbit of $\rho$ under the action of the mapping class group $\Mod(S,\ast)$ is finite. We prove that the image of $\rho$ is finite. A similar result holds if $\pi_1(S)$ is replaced by the free group $F_n$ on $n\geq 2$
generators and where $\Mod(S,\ast)$ is replaced by $\Aut(F_n)$. We thus resolve a well--known question of M. Kisin. We show that if $G$ is a linear algebraic group and if the representation variety of $\pi_1(S)$ is replaced by the character variety, then there are infinite image representations which are fixed by the whole mapping class group.
\end{abstract}

\maketitle

\section{Introduction}

Let $G$ and $\Gamma$ be groups, and let \[\RR (\Gamma,G)\,:=\,\Hom(\Gamma,G)\] be the 
\emph{representation variety} of $\Gamma$. The automorphism group $\Aut(\Gamma)$ acts on 
$\RR(\Gamma,G)$ by precomposition.

Let $\Gamma\,=\,\pi_1(S)$, where $S$ is a closed, orientable surface of genus at least 
two with a base-point $\ast$. The Dehn--Nielsen--Baer Theorem (see~\cite{FarbMargalit}) 
implies that the mapping class group $\Mod(S,\ast)$ of $S$ which preserves $\ast$ is 
identified with an index two subgroup of $\Aut(\Gamma)$. In this note, we show that if 
$\rho\,\in\,\RR(\Gamma,G)$ has a finite $\Mod(S,\ast)$--orbit, then the image of $\rho$ is 
finite, thus resolving a well--known question which is often attributed to M. Kisin. We 
show that the same conclusion holds if $\Gamma$ is the free group $F_n$ of 
finite rank $n\, \geq\, 2$, and $\Mod(S,\ast)$ is replaced by $\Aut(F_n)$.

\subsection{Main results}

In the sequel, we assume that $S$ is a closed, orientable surface of genus $g\geq 2$ and 
that $F_n$ is a free group of rank at least two, unless otherwise stated explicitly.

\begin{theorem}\label{intromain}
Let $\Gamma\,=\, \pi_1(S) $ or $F_n$, and let $G$ be an arbitrary group. Suppose that
$\rho \,\in\, \RR(\Gamma, G)$ has a finite orbit under the action of $\Aut(\Gamma)$. Then $\rho(\Gamma)$ is finite.
\end{theorem}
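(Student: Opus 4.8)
The plan is first to reduce to the case where $\rho$ is surjective: replacing $G$ by $\rho(\Gamma)$ alters neither the hypothesis nor the conclusion, and since $\Gamma$ is finitely generated, $G\,=\,\rho(\Gamma)$ is finitely generated. It then suffices to show that such a $G$ is finite.

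The crucial first step is to feed the inner automorphisms into the hypothesis. Since $\Inn(\Gamma)\,\leq\,\Aut(\Gamma)$, the orbit of $\rho$ under $\Inn(\Gamma)$ is finite. For $\gamma\,\in\,\Gamma$, the automorphism $x\,\mapsto\,\gamma x\gamma^{-1}$ carries $\rho$ to the representation $x\,\mapsto\,\rho(\gamma)\,\rho(x)\,\rho(\gamma)^{-1}$, and two elements $h_1,h_2\,\in\,G\,=\,\rho(\Gamma)$ conjugate $\rho$ to the same representation precisely when $h_1^{-1}h_2\,\in\,Z(G)$. Hence the $\Inn(\Gamma)$--orbit of $\rho$ is in bijection with $G/Z(G)$, so $G/Z(G)$ is finite. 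By Schur's theorem, a group whose center has finite index has finite commutator subgroup, so $[G,G]$ is finite. Thus $G$ fits into a short exact sequence $1\,\to\,[G,G]\,\to\,G\,\to\,G^{\mathrm{ab}}\,\to\,1$ with finite kernel, and it remains only to prove that $G^{\mathrm{ab}}$ is finite.

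For that I pass to the abelianization of the target. Post-composition with $G\,\to\,G^{\mathrm{ab}}$ yields an $\Aut(\Gamma)$--equivariant map $\RR(\Gamma,G)\,\to\,\RR(\Gamma,G^{\mathrm{ab}})$, so the induced representation $\bar\rho\colon\Gamma\,\to\,G^{\mathrm{ab}}$ again has finite $\Aut(\Gamma)$--orbit, and it is surjective. Since $G^{\mathrm{ab}}$ is abelian, $\bar\rho$ factors through $\Gamma^{\mathrm{ab}}$, and the action of $\Aut(\Gamma)$ on $\RR(\Gamma^{\mathrm{ab}},G^{\mathrm{ab}})$ factors through the image of $\Aut(\Gamma)$ in $\GL(\Gamma^{\mathrm{ab}})$, namely $\GL_n(\Z)$ when $\Gamma\,=\,F_n$ and $\mathrm{Sp}_{2g}(\Z)$ when $\Gamma\,=\,\pi_1(S)$. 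Recording $\bar\rho$ by the tuple of images of a chosen basis of generators, and applying the powers $M^k$ of a single transvection $M$ (for $F_n$ an elementary Nielsen transvection $x_i\,\mapsto\,x_ix_j$; for $\pi_1(S)$ a symplectic transvection such as $a_1\,\mapsto\,a_1+b_1$, which one checks preserves the intersection form), one coordinate runs through $\{c+k\,c'\,:\,k\,\in\,\Z\}$. Finiteness of the orbit forces each such $c'$ to be torsion; running over all generators shows that every generator of $G^{\mathrm{ab}}$ is torsion, so the finitely generated abelian group $G^{\mathrm{ab}}$ is finite. Combining the two halves, $[G,G]$ and $G^{\mathrm{ab}}$ are both finite, whence $G\,=\,\rho(\Gamma)$ is finite.

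The step I expect to be the crux is the passage through inner automorphisms together with Schur's theorem: once $G/Z(G)$ is seen to be finite the problem collapses to an abelian target, where the transvection computation is routine. The one genuinely case-dependent point requiring care is verifying that the required transvections actually lie in the image of $\Aut(\Gamma)$ in $\GL(\Gamma^{\mathrm{ab}})$ --- immediate for $F_n$, and for $\pi_1(S)$ needing the observation that the chosen shear is symplectic together with the classical surjection of $\Aut(\pi_1(S))$ onto $\mathrm{Sp}_{2g}(\Z)$.
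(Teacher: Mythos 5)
Your proof is correct, and it takes a genuinely different --- and more elementary --- route than the paper's. The first step coincides: like Lemma \ref{finabelian}, you feed $\Inn(\Gamma)$ into the finite-orbit hypothesis to conclude that the center of $G=\rho(\Gamma)$ has finite index (your version, via the bijection of the $\Inn(\Gamma)$--orbit with $G/Z(G)$ after reducing to surjective $\rho$, is in fact cleaner than the paper's passage through a finite-index subgroup $\Gamma_1$ fixing $\rho$). From there the two arguments diverge. The paper sets $N=\rho^{-1}(Z)$ for a torsion-free central $Z$ of finite index, invokes Chevalley--Weil theory (Theorems \ref{thm:cw1} and \ref{thm:cw2}) to decompose $H_1(N,\Q)$ as a $\Q[Q]$--module, uses the representation-theoretic Schur's Lemma to show $Z\otimes\Q$ is a quotient of the invariant block $V_0\cong H_1(\Gamma,\Q)$, and kills that quotient via finiteness of coinvariants for the finite-index stabilizer (Corollary \ref{cor:finite index}). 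You instead invoke the group-theoretic theorem of Schur (center-by-finite implies finite commutator subgroup), which collapses the whole problem to the abelianized target $G^{\mathrm{ab}}$; your transvection computation, showing that a finite orbit forces each generator's image to be torsion, is precisely the finite-orbit strengthening of the paper's fixed-point argument in Lemma \ref{trivial}, and it is sound: finiteness of $\{c+kc'\colon k\in\Z\}$ forces $c'$ torsion, and a finitely generated abelian group generated by torsion elements is finite. Your route avoids passing to the finite-index subgroup $N$ and all Chevalley--Weil input, and it makes transparent a generalization: the same argument works for any finitely generated $\Gamma$ whose automorphisms induce enough transvections on $H_1(\Gamma,\Z)$ (more generally, whenever finite-index subgroups of $\Aut(\Gamma)$ have finite coinvariants on $H_1(\Gamma,\Z)$, which is the content of Corollary \ref{cor:finite index}). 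What the paper's heavier machinery buys is the finer $\Q[Q]$--module structure of the homology of finite covers and the transfer identification of the invariant part, which have independent interest beyond this theorem. One small imprecision on your side: the image of $\Aut(\pi_1(S))$ in $\GL_{2g}(\Z)$ is the extended group $\mathrm{Sp}^{\pm}_{2g}(\Z)$ (orientation-reversing automorphisms negate the intersection form), not $\mathrm{Sp}_{2g}(\Z)$ itself; but your argument uses only that the specific symplectic transvections $a_i\mapsto a_i+b_i$ and $b_i\mapsto b_i+a_i$ are realized --- e.g.\ by Dehn twists --- so nothing breaks.
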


Note that if $\Gamma\,=\,\pi_1(S)$ then $\rho$ has a finite orbit under $\Aut(\Gamma)$ if and only if 
it has a finite orbit under $\Mod(S,\ast)$ because $\Mod(S,\ast)$ is a subgroup of
$\Aut(\Gamma)$ of finite index. Note also that if the homomorphism $\rho$ has a finite image
then the orbit of $\rho$ for the action of $\Aut(\Gamma)$ on $\RR(\Gamma, G)$ is finite, because
$\Gamma$ is finitely generated.

We will show by example that Theorem \ref{intromain} fails for a general group $\Gamma$. 
Moreover, Theorem \ref{intromain} fails if $\Gamma$ is a linear algebraic group with 
the representation variety of $\Gamma$ being replaced by the character variety
$\Hom(\Gamma,G)/G$:

\begin{prop}\label{prop:character}
Let $\Gamma\,=\,\pi_1(S)$ and let \[\mathcal{X}(\Gamma,\GL_n(\C))\,:=\,
{\rm Hom}(\Gamma,\GL_n(\C))/\!\!/\GL_n(\C)\] be its $\GL_n(\C)$ character variety. For $n\gg 0$, there
exists a point $\chi\,\in\,\RX(\Gamma,\GL_n(\C))$ such that $\chi$ is the character of a
representation with infinite image while the action of $\Mod(S,\ast)$ on $\RX(\Gamma,\GL_n(\C))$
fixes $\chi$.
\end{prop}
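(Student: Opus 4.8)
The plan is to exploit precisely the gap between the representation variety and the character variety that Theorem~\ref{intromain} creates: an infinite-image representation can never have a finite $\Mod(S,\ast)$-orbit in $\RR(\Gamma,\GL_n(\C))$, yet nothing prevents its conjugacy class from being a genuine fixed point of the induced action on $\RX(\Gamma,\GL_n(\C))$. To manufacture such a representation I would realize $\Gamma=\pi_1(S)$ as the point-pushing subgroup in the Birman exact sequence $1\to\pi_1(S)\to\Mod(S,\ast)\to\Mod(S)\to 1$, and take the sought representation $\rho$ to be the restriction to this subgroup of a representation $\widetilde\rho$ defined on all of $\Mod(S,\ast)$.

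The crucial structural point is that the point-pushing copy of $\pi_1(S)$ is \emph{normal} in $\Mod(S,\ast)$, and that the conjugation action of $\Mod(S,\ast)$ on it is exactly the Dehn--Nielsen--Baer action of $\Mod(S,\ast)$ on $\pi_1(S,\ast)$. Consequently, for every $\phi\in\Mod(S,\ast)$ and $\gamma\in\pi_1(S)$ one has $\rho(\phi\cdot\gamma)=\widetilde\rho(\phi)\,\rho(\gamma)\,\widetilde\rho(\phi)^{-1}$, so that $\rho\circ\phi$ is conjugate to $\rho$ and the two determine the same point of the character variety. In other words, the outer action of $\Mod(S,\ast)$ on $\Gamma$, which genuinely moves $\rho$ inside $\RR$, becomes an \emph{inner} automorphism after applying $\widetilde\rho$ and is therefore invisible on $\RX$. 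This is the entire source of the invariance, and it is purely formal: it works for any $\widetilde\rho$ extended to $\Mod(S,\ast)$, irrespective of the size of its image.

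For $\widetilde\rho$ I would take a quantum (Witten--Reshetikhin--Turaev) representation of $\Mod(S,\ast)$ at a sufficiently high level. These are a priori projective, valued in $\PGL_N(\C)$, so I would first record that the restriction $\rho$ of such a representation to the point-pushing subgroup has infinite image; this is exactly what forces $n\gg0$, since low-level quantum representations have finite image while infinitude at high level is a deep property of quantum representations (in the spirit of the work of Funar and Masbaum). To land in $\GL_n(\C)$ rather than $\PGL_N(\C)$, I would then compose with the adjoint (conjugation) action on the matrix algebra $M_N(\C)\cong\C^{N^2}$, obtaining an honest linear representation $\pi_1(S)\to\GL_{N^2}(\C)$. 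Because the adjoint map is injective on $\PGL_N(\C)$ this preserves infinite image, and because conjugation in $\PGL_N(\C)$ becomes conjugation in $\GL_{N^2}(\C)$ it preserves the $\Mod(S,\ast)$-invariance of the character established above; setting $n=N^2$ yields the desired point $\chi$.

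The main obstacle is not the equivariance, which is formal once the normality of the point-pushing subgroup is in hand, but the verification that the image is genuinely infinite: this relies on the nontrivial input that high-level quantum surface-group representations are infinite, together with the (easy but necessary) check that passing from the projective to the linear adjoint representation does not collapse the image. A secondary technical point is to confirm that the conjugation action in the Birman sequence coincides on the nose with the action defining the $\Aut(\Gamma)$-action on $\RR(\Gamma,\GL_n(\C))$, so that ``fixed by $\Mod(S,\ast)$ on $\RX$'' means precisely what Proposition~\ref{prop:character} asserts.
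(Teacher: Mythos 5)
Your proposal is correct and takes essentially the same route as the paper: the paper likewise restricts a linear representation of $\Mod(S,\ast)$ arising from the quantum representations of~\cite{KS} (quoted as Theorem~\ref{thm:KS}) to the normal point-pushing copy of $\pi_1(S)$ from the Birman exact sequence, with invariance of the character following formally from normality, exactly as you argue. Your only deviation is cosmetic: you fix the specific adjoint embedding $\PGL_N(\C)\hookrightarrow\GL_{N^2}(\C)$, whereas the paper composes with an arbitrary embedding $\PGL_n(\C)\hookrightarrow\GL_m(\C)$ and cites the infinitude of the image as a black box rather than rederiving it.
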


\subsection{Punctured surfaces}

If $S$ is not closed then $\pi_1(S)$ is a free group, and the group $\Mod(S,\ast)$ is 
identified with a subgroup of $\Aut(\pi_1(S))$, though this subgroup does not have finite 
index. The conclusion of Theorem \ref{intromain} remains valid if $S$ is a once--punctured 
surface of genus $g\geq 1$ and $\Aut(\pi_1(S))$ is replaced by $\Mod(S,\ast)$, though it 
fails for general punctured surfaces. See Proposition \ref{prop:braid}.

\section{$\Aut(\Gamma)$--invariant representations}

We first address the question in the special case where the $\Aut(\Gamma)$--orbit of $\rho \,:\, \Gamma
\,\longrightarrow\, G$ on $\RR (\Gamma, 
G)$ consists of a single point.

\begin{lemma}\label{abelian} Let $\Gamma$ be any group, and suppose that $\rho\in\RR (\Gamma, G)$ is $\Aut(\Gamma)$--invariant. Then $\rho(\Gamma)$ is abelian. 
\end{lemma}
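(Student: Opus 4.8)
The plan is to exploit the fact that $\Aut(\Gamma)$ contains the inner automorphisms as a subgroup; in fact the full strength of the hypothesis will not be needed, since invariance under $\Inn(\Gamma)$ alone already forces the conclusion. This is really the entire content of the lemma.

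First I would fix the convention that $\Aut(\Gamma)$ acts on $\RR(\Gamma,G)$ by precomposition, so that $\rho$ being $\Aut(\Gamma)$--invariant means $\rho\circ\phi\,=\,\rho$ for every $\phi\,\in\,\Aut(\Gamma)$. In particular this holds for every inner automorphism $c_g\,\colon\, x\,\longmapsto\, gxg^{-1}$ with $g\,\in\,\Gamma$. Applying invariance to $c_g$ then yields $\rho(gxg^{-1})\,=\,\rho(x)$ for all $g,\,x\,\in\,\Gamma$.

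Next I would expand the left-hand side using that $\rho$ is a homomorphism, obtaining $\rho(g)\rho(x)\rho(g)^{-1}\,=\,\rho(x)$, which rearranges to $\rho(g)\rho(x)\,=\,\rho(x)\rho(g)$. Since $g$ and $x$ range over all of $\Gamma$, every pair of elements of the image $\rho(\Gamma)$ commutes, and hence $\rho(\Gamma)$ is abelian.

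There is no genuine obstacle here: the argument is entirely the observation that inner automorphisms lie in $\Aut(\Gamma)$, so that invariance under the automorphism group specializes to a statement about conjugation inside $\Gamma$. The only point requiring a moment of care is the direction of the action convention, but this does not affect the argument, because $\phi$ ranges over the whole of $\Aut(\Gamma)$ and hence $\phi^{-1}$ does as well, making $\rho\circ\phi\,=\,\rho$ and $\rho\circ\phi^{-1}\,=\,\rho$ equivalent hypotheses.
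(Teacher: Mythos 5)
Your proof is correct and follows exactly the paper's argument: restrict the $\Aut(\Gamma)$--invariance to the inner automorphisms $\Inn(\Gamma)$ to get $\rho(gxg^{-1})=\rho(x)$, and expand to conclude that $\rho(\Gamma)$ is abelian. Your remark about the action convention is a nice extra precaution, but the approach is the same as in the paper.
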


\begin{proof} Since $\rho \in \RR (\Gamma, G)$ is invariant under the normal subgroup
$\Inn(\Gamma)\,<\,\Aut(\Gamma)$ consisting of inner automorphisms,
$$\rho(ghg^{-1}) \,=\, \rho (h)$$
for all $g, h \in G$. Hence we have $\rho(g) \rho(h) = \rho(h) \rho(g)$.
\end{proof}

\begin{lemma}\label{trivial}
Let $\Gamma = \pi_1(S)$ or $\Gamma=F_n$, and let
$\rho\colon \Gamma \longrightarrow G$ be $\Aut(\Gamma)$--invariant. Then $\rho(\Gamma)$ is trivial. 
\end{lemma}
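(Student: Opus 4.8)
The plan is to push the problem to the abelianization of $\Gamma$ and then produce, for each generator, an automorphism that forces the induced map to kill it. By Lemma~\ref{abelian} the group $\rho(\Gamma)$ is abelian, so $\rho$ factors as $\rho\,=\,\bar\rho\circ a$, where $a\colon\Gamma\longrightarrow A:=\Gamma^{\mathrm{ab}}$ is the abelianization homomorphism and $\bar\rho\colon A\longrightarrow\rho(\Gamma)$ takes values in an abelian group, which I write additively. Every $\sigma\in\Aut(\Gamma)$ induces an automorphism $\bar\sigma$ of $A$, and the $\Aut(\Gamma)$--invariance $\rho\circ\sigma=\rho$ descends to $\bar\rho(\bar\sigma v)=\bar\rho(v)$, that is, $\bar\rho(\bar\sigma v-v)=0$, for all $v\in A$. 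Hence $\bar\rho$ vanishes on the subgroup $N\leq A$ generated by all differences $\bar\sigma v-v$ with $\sigma\in\Aut(\Gamma)$ and $v\in A$, and it suffices to show $N=A$: then $\bar\rho=0$ and $\rho$ is trivial.

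For $\Gamma=F_n$ with free basis $x_1,\dots,x_n$, set $e_i:=a(x_i)$, so that $A=\Z^n$ with basis $e_1,\dots,e_n$. For each ordered pair $i\neq j$ the Nielsen transformation $\sigma_{ij}$ that fixes every $x_k$ with $k\neq j$ and sends $x_j\mapsto x_jx_i$ is an automorphism of $F_n$, and it induces $\bar\sigma_{ij}(e_j)=e_j+e_i$. Thus $e_i=\bar\sigma_{ij}(e_j)-e_j\in N$. Since $n\geq 2$, every basis vector arises this way, so $N=\Z^n=A$. One may also argue directly here, without invoking Lemma~\ref{abelian}: invariance gives $\rho(x_j)\rho(x_i)=\rho(x_jx_i)=\rho(\sigma_{ij}(x_j))=\rho(x_j)$, whence $\rho(x_i)=1$ for every $i$.

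For $\Gamma=\pi_1(S)$ we have $A=H_1(S;\Z)\cong\Z^{2g}$, equipped with the algebraic intersection form, and the image of $\Mod(S,\ast)\leq\Aut(\Gamma)$ in $\Aut(A)=\GL_{2g}(\Z)$ is the symplectic group $\operatorname{Sp}(2g,\Z)$. Choosing a symplectic basis $\{\alpha_1,\beta_1,\dots,\alpha_g,\beta_g\}$ realized by simple closed curves, the Dehn twist about the curve representing $\alpha_i$ induces the symplectic transvection $v\mapsto v+\langle v,\alpha_i\rangle\,\alpha_i$ on $A$; applied to $\beta_i$ this yields $\bar\sigma(\beta_i)-\beta_i=\pm\alpha_i\in N$, and twisting about $\beta_i$ gives $\pm\beta_i\in N$. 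Hence $N$ contains every basis vector, so $N=A$, forcing $\bar\rho=0$ and completing the proof.

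The only real obstacle is the surface case, where I cannot perform free Nielsen moves because $\pi_1(S)$ carries the surface relation; this is exactly what the reduction to $H_1(S;\Z)$ circumvents. The essential input is the standard fact that Dehn twists act on homology as symplectic transvections, and that these transvections already furnish enough elements $\bar\sigma v-v$ to generate $A$. I would double-check the orientation and sign conventions in the intersection pairing, but these do not affect the conclusion, since only the nonvanishing of $\langle\beta_i,\alpha_i\rangle$ is used.
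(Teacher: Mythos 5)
Your proof is correct and takes essentially the same approach as the paper's: both factor $\rho$ through the abelianization via Lemma~\ref{abelian} and then kill each basis vector of $H_1(\Gamma,\Z)$ using differences $\bar\sigma v - v$ coming from transvections induced by automorphisms (Nielsen moves for $F_n$, symplectic transvections for $\pi_1(S)$). Your only cosmetic departures are phrasing the conclusion via the subgroup $N$ generated by such differences (i.e., vanishing of coinvariants) and realizing the symplectic transvections explicitly as Dehn twists, where the paper simply invokes the existence of suitable elements of ${\rm Sp}_{2g}(\Z)$ in the image of $\Mod(S,\ast)$.
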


\begin{proof}
Without loss of generality, assume that $\rho(\Gamma) \,=\, G$. By Lemma \ref{abelian}, the group 
$\rho (\Gamma)$ is abelian. Hence $\rho$ factors as
\begin{equation}\label{fa}
\rho \,= \,\rho^{ab} \circ A\, ,
\end{equation}
where \[A \,\colon\, 
\Gamma\,\longrightarrow\, \Gamma/[\Gamma,\, \Gamma]\,=\,H_1(\Gamma,\,\Z)\] is the abelianization map, and $\rho^{ab}\colon H_1(\Gamma,\Z) 
\longrightarrow G$ is the induced representation of $H_1(\Gamma,\Z)$. Let $g$ be the genus
of $S$, so that the rank of $H_1(\Gamma,\,\Z)$ is $2g$. Fix a symplectic basis
\[\{a_1,\, \cdots, \, a_g,\, b_1,\, \cdots,\, b_g\}\]
of $H_1(\Gamma,\, \Z)$. So the group of automorphisms of $H_1(\Gamma,\, \Z)$ preserving the cap
product is identified with ${\rm Sp}_{2g}( \Z)$.

Suppose that $\rho(\Gamma)$ is not trivial. Then there exists $z \,\in\, H_1(\Gamma,\,\Z)$ such 
that $\rho^{ab}(z)$ is not trivial. Consider the action of ${\rm Sp}_{2g}( \Z)$ on 
$H_1(\Gamma,\,\Z)$, induced by the action of the mapping class group $\Mod (S, \ast)$ on 
$H_1(\Gamma,\,\Z)$. There is an element of ${\rm Sp}_{2g}( \Z) $ taking $a_i$ to
$a_i + b_i$. Therefore, from the given condition that the action of $\Mod (S, \ast)$ on $\rho$
has a trivial orbit it follows that $\rho^{ab}(a_i)\,=\, \rho^{ab}(a_i + b_i)$, and hence
$\rho^{ab}(b_i) = 0$. Exchanging the roles of $a_i$ and of $ b_i$, we have $\rho^{ab}(a_i) = 0$. Thus $\rho^{ab}$ is a
trivial representation, and hence $\rho$ is also trivial by \eqref{fa}.

A similar argument works if we set $\Gamma\,=\,F_n$. Instead of ${\rm Sp}_{2g}( \Z) $, we 
have an action of $\GL_n(\Z)$ on $H_1(\Gamma,\,\Z)$ after choosing a basis
$\{a_1, \cdots , a_n\}$ for $H_1(\Gamma,\, \Z)$. Then for each $1\,\leq j\leq n$ and $i\neq j$, there exist an element of
$\GL_n(\Z)$ that takes $a_i$ to $a_i + a_j$. This 
implies that $\rho^{ab}(a_j) = 0$ as before.
\end{proof}

There is an immediate generalization of Lemma \ref{trivial} whose proof is identical to 
the one given:

\begin{lemma}
Let $\Gamma$ be group, let \[H_1(\Gamma,\Z)_{\Out(\Gamma)}\,=\,H_1(\Gamma,\,\Z)/\langle \phi(v)-v\mid \textrm{$v
\,\in\, H_1(\Gamma,\,\Z)$ and $\phi\,\in\,\Out(\Gamma)$}\rangle\] be the module of co--invariants of
the $\Out(\Gamma)$ action on $H_1(\Gamma,\,\Z)$, and let $\rho\in\RR(\Gamma,G)$ be an
$\Aut(\Gamma)$--invariant representation of $\Gamma$. If $H_1(\Gamma,\Z)_{\Out(\Gamma)}\,=\,0$ then $\rho(\Gamma)$ is
trivial. If $H_1(\Gamma,\Z)_{\Out(\Gamma)}$ is finite then $\rho(\Gamma)$ is finite as well.
\end{lemma}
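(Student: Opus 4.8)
The plan is to follow the proof of Lemma \ref{trivial} essentially verbatim, replacing the explicit computations with ${\rm Sp}_{2g}(\Z)$ or $\GL_n(\Z)$ by the abstract module of co--invariants. First I would invoke Lemma \ref{abelian} to conclude that $\rho(\Gamma)$ is abelian, since $\rho$ is $\Aut(\Gamma)$--invariant. Consequently $\rho$ factors as $\rho\,=\,\rho^{ab}\circ A$, where $A\colon\Gamma\longrightarrow H_1(\Gamma,\Z)$ is the abelianization map and $\rho^{ab}\colon H_1(\Gamma,\Z)\longrightarrow G$ is the induced homomorphism.

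Next I would transfer the $\Aut(\Gamma)$--invariance of $\rho$ into an $\Out(\Gamma)$--invariance of $\rho^{ab}$. For any $\psi\in\Aut(\Gamma)$ one has $A\circ\psi\,=\,\psi_*\circ A$, where $\psi_*$ denotes the induced automorphism of $H_1(\Gamma,\Z)$; combined with $\rho\circ\psi\,=\,\rho$ and the surjectivity of $A$, this yields $\rho^{ab}\circ\psi_*\,=\,\rho^{ab}$. The point that makes the passage to $\Out(\Gamma)$ legitimate is that inner automorphisms act trivially on homology, so the assignment $\psi\mapsto\psi_*$ descends to an action of $\Out(\Gamma)$ on $H_1(\Gamma,\Z)$. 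Hence for every $\phi\in\Out(\Gamma)$ and every $v\in H_1(\Gamma,\Z)$ we obtain $\rho^{ab}(\phi(v)-v)\,=\,0$.

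It follows that $\rho^{ab}$ vanishes on the subgroup $\langle \phi(v)-v\mid \phi\in\Out(\Gamma),\ v\in H_1(\Gamma,\Z)\rangle$, and therefore factors through the quotient $H_1(\Gamma,\Z)_{\Out(\Gamma)}$. If this module of co--invariants is trivial, then $\rho^{ab}$ is the trivial homomorphism, and so $\rho$ is trivial by the factorization $\rho\,=\,\rho^{ab}\circ A$. If it is merely finite, then the image of $\rho^{ab}$ is a quotient of a finite group and hence finite, so $\rho(\Gamma)\,=\,\rho^{ab}(H_1(\Gamma,\Z))$ is finite as well.

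I do not expect a genuine obstacle here: the only point requiring (minor) care is the verification that the $\Aut(\Gamma)$--action on $H_1(\Gamma,\Z)$ factors through $\Out(\Gamma)$, which is the standard fact that conjugation induces the identity on homology. Everything else is a direct reprise of the argument for Lemma \ref{trivial}, with the concrete lattice computations replaced by the universal property of the co--invariant quotient, exactly as the phrase \emph{``whose proof is identical to the one given''} suggests.
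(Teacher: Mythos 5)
Your proof is correct and is exactly the argument the paper intends: the paper gives no separate proof, stating only that it is \emph{``identical''} to that of Lemma \ref{trivial}, and your write-up is precisely that argument with the transvection computations replaced by the universal property of the co--invariant quotient. You also correctly supply the one point left implicit in the paper, namely that inner automorphisms act trivially on $H_1(\Gamma,\Z)$, so the $\Aut(\Gamma)$--action descends to $\Out(\Gamma)$ and $\rho^{ab}$ factors through $H_1(\Gamma,\Z)_{\Out(\Gamma)}$.
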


\begin{cor}\label{cor:finite index}
Let $\Gamma$ be a closed surface group or a finitely generated free group, and let $H\,<\,
\Aut(\Gamma)$ be a finite index subgroup. Then the module of $H$--co--invariants for $H_1(\Gamma,\,
\Z)$ is finite.
\end{cor}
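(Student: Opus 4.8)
The plan is to reduce the assertion to an elementary computation with the induced action on $M\,:=\,H_1(\Gamma,\,\Z)$. Since inner automorphisms act trivially on homology, the $H$--action on $M$ factors through the image $\bar H$ of $H$ under the natural homomorphism $\Aut(\Gamma)\,\longrightarrow\,\Aut(M)$, and the module of $H$--co--invariants coincides with the module of $\bar H$--co--invariants $M_{\bar H}$. This homomorphism is surjective onto ${\rm Sp}_{2g}(\Z)$ when $\Gamma\,=\,\pi_1(S)$ and onto $\GL_n(\Z)$ when $\Gamma\,=\,F_n$ (classically, via Dehn--Nielsen--Baer and Nielsen's theorem respectively). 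Since $H$ has finite index in $\Aut(\Gamma)$ and the map is surjective, I would first observe that $\bar H$ has finite index in ${\rm Sp}_{2g}(\Z)$ (resp. $\GL_n(\Z)$); indeed $[{\rm Sp}_{2g}(\Z):\bar H]\,\leq\,[\Aut(\Gamma):H]$, because surjections do not increase the number of cosets.

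Next I would exploit precisely the transvections already used in the proof of Lemma \ref{trivial}. Fix the symplectic basis $\{a_1,\,\cdots,\,a_g,\,b_1,\,\cdots,\,b_g\}$ (resp. the basis $\{a_1,\,\cdots,\,a_n\}$). For each $i$ there is a symplectic transvection $t\,\in\,{\rm Sp}_{2g}(\Z)$ with $t(a_i)\,=\,a_i+b_i$ (resp. an elementary automorphism in $\GL_n(\Z)$ taking $a_j$ to $a_j+a_i$ for some $j\,\neq\, i$, which exists since $n\,\geq\, 2$). Because $\bar H$ has finite index, some positive power $t^k$ lies in $\bar H$, and $t^k(a_i)\,=\,a_i+k\,b_i$. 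Hence $t^k(a_i)-a_i\,=\,k\,b_i$ belongs to the defining submodule $\langle \phi(v)-v\rangle$, so $k\,[b_i]\,=\,0$ in $M_{\bar H}$. Exchanging the roles of $a_i$ and $b_i$ (resp. letting $i$ range over all indices in the free case) shows that the class of every basis vector is torsion in $M_{\bar H}$.

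Finally I would conclude that $M_{\bar H}$ is generated as an abelian group by the finitely many images of the chosen basis vectors, each of which has finite order, so $M_{\bar H}$ is a finitely generated torsion abelian group, and therefore finite. The only step requiring genuine care is the passage from finite index in $\Aut(\Gamma)$ to finite index in the homological image, together with the surjectivity of $\Aut(\Gamma)\,\longrightarrow\,\Aut(M)$; granting these, everything reduces to taking suitable powers of the standard transvections exactly as in Lemma \ref{trivial}, and I expect no serious obstacle beyond this bookkeeping. As an alternative phrasing of the last step, one may note that $M_{\bar H}$ is finite if and only if $(M\otimes\Q)_{\bar H}\,=\,0$, a vanishing that the same transvections establish directly.
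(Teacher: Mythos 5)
Your proof is correct and follows essentially the same route as the paper, whose own argument likewise takes powers of the transvections from Lemma \ref{trivial} lying in $H$ (there, a uniform $N$th power $\phi^N\in H$) to make each basis vector of $H_1(\Gamma,\,\Z)$ torsion in the module of co--invariants, forcing that module to be finite. The only cosmetic difference is that you pass to the image $\bar H$ under $\Aut(\Gamma)\longrightarrow\Aut(H_1(\Gamma,\Z))$; note merely that for $\Gamma=\pi_1(S)$ this image is the extended group of automorphisms preserving the symplectic form up to sign rather than ${\rm Sp}_{2g}(\Z)$ itself, which is harmless since it contains ${\rm Sp}_{2g}(\Z)$ and in particular all the transvections you use.
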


\begin{proof}
Since $H\,<\,\Aut(\Gamma)$ has finite index, there exists an integer $N$ such that for each
$\phi\,\in\,\Aut(\Gamma)$, we have $\phi^N\,\in\, H$. In particular, the $N^{th}$ powers of the
transvections occurring in the proof of Lemma \ref{trivial} lie in $H$, whence the $N^{th}$
powers of elements of a basis for $H_1(\Gamma,\,\Z)$ must be trivial. Consequently, the module of $H$--co--invariants is finite.
\end{proof}

\section{Representations with a finite orbit}

\subsection{Central extensions of finite groups}

\begin{lemma}\label{finabelian}
Let $\Gamma$ be any group, and let $\rho\colon \Gamma \,\longrightarrow\, G$ be a representation.
Suppose that the orbit of $\rho$ under the action of $\Aut(\Gamma)$ on $\RR (\Gamma, G)$ is
finite. Then $\rho(\Gamma)$ is a central extension of a finite group. 
\end{lemma}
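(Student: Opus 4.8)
The plan is to convert the finiteness of the orbit into the finiteness of an index, and then to recover a central subgroup from the inner automorphisms, in the spirit of Lemma \ref{abelian}. Write $\mathrm{Stab}(\rho)\,=\,\{\phi\in\Aut(\Gamma)\mid \rho\circ\phi=\rho\}$ for the stabilizer of $\rho$. Since the orbit of $\rho$ under $\Aut(\Gamma)$ is finite, this stabilizer has finite index in $\Aut(\Gamma)$. As $\Inn(\Gamma)$ is a subgroup of $\Aut(\Gamma)$, the intersection $\Inn(\Gamma)\cap\mathrm{Stab}(\rho)$ then has finite index in $\Inn(\Gamma)$.

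The key step is to identify this intersection with the image of the preimage of the center of $\rho(\Gamma)$. For $g\in\Gamma$, the inner automorphism $c_g\colon h\mapsto ghg^{-1}$ fixes $\rho$ if and only if $\rho(ghg^{-1})=\rho(h)$ for all $h\in\Gamma$, that is, if and only if $\rho(g)$ commutes with every element of $\rho(\Gamma)$. Thus, setting $K\,=\,\rho^{-1}(Z(\rho(\Gamma)))$, I would observe that $c_g\in\mathrm{Stab}(\rho)$ precisely when $g\in K$. Since $Z(\Gamma)\subseteq K$ and $\Inn(\Gamma)\cong\Gamma/Z(\Gamma)$, the image of $K$ in $\Inn(\Gamma)$ is exactly $\Inn(\Gamma)\cap\mathrm{Stab}(\rho)$, which has finite index; hence $[\Gamma:K]$ is finite.

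It then remains only to assemble the central extension. By construction $\rho(K)\subseteq Z(\rho(\Gamma))$ is central in $\rho(\Gamma)$, while $\rho(\Gamma)/\rho(K)$ is a quotient of the finite group $\Gamma/K$ and so is itself finite. The short exact sequence
\[
1\longrightarrow \rho(K)\longrightarrow \rho(\Gamma)\longrightarrow \rho(\Gamma)/\rho(K)\longrightarrow 1
\]
then exhibits $\rho(\Gamma)$ as a central extension of a finite group, as required. I do not anticipate a serious obstacle: the only point requiring care is the passage between a finite-index subgroup of $\Inn(\Gamma)$ and a finite-index subgroup of $\Gamma$, which is handled by the isomorphism $\Inn(\Gamma)\cong\Gamma/Z(\Gamma)$ together with the containment $Z(\Gamma)\subseteq K$.
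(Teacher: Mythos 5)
Your proof is correct and follows essentially the same route as the paper: restrict the finite-orbit hypothesis to the inner action to obtain a finite-index subgroup of $\Gamma$ (your $K=\rho^{-1}(Z(\rho(\Gamma)))$, the paper's $\Gamma_1$) whose image under $\rho$ is central, and conclude that $\rho(\Gamma)$ modulo that central image is finite. The only difference is that you make explicit the bookkeeping via $\Inn(\Gamma)\cong\Gamma/Z(\Gamma)$ and the containment $Z(\Gamma)\subseteq K$, which the paper leaves implicit.
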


\begin{proof}
The given condition implies that for the action of $\Inn(\Gamma)$ on
$\RR (\Gamma, G)$, the orbit of $\rho$ is finite. Consequently, there exists
a finite index subgroup $\Gamma_1$ of $\Gamma$ that fixes $\rho$ under the inner action. Hence by the
argument in Lemma \ref{abelian}, the group $\rho(\Gamma_1)$ commutes with $\rho(\Gamma)$, so
the center of $\rho(\Gamma)$ contains $\rho(\Gamma_1)$. Since $\Gamma_1$ is of finite
index in $\Gamma$, the result follows.
\end{proof}

\begin{lemma}\label{lem:stabilizer}
Let $\rho$ be as in Lemma \ref{finabelian} and let $H=\mathrm{Stab}(\rho)<\Aut(\Gamma)$ be the stabilizer of $\rho$. Then the center of $\rho(\Gamma)$ is equal to its module of co--invariants under the $H$--action.
\end{lemma}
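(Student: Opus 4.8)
The plan is to reduce the asserted equality to the statement that the stabilizer $H$ acts \emph{trivially} on the center $Z\,:=\,Z(\rho(\Gamma))$. For any group acting on an abelian group $Z$, the module of co--invariants $Z_H$ is the largest quotient of $Z$ on which the action is trivial, and the canonical map $Z\,\longrightarrow\, Z_H$ is always surjective; hence $Z\,=\,Z_H$ via this canonical map holds if and only if $H$ acts trivially on $Z$. So it suffices to exhibit the $H$--action on $Z$ and check that it is trivial.

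First I would pin down the action. Writing $N\,:=\,\rho(\Gamma)$, the hypothesis $\phi\,\in\, H\,=\,\mathrm{Stab}(\rho)$ means precisely that $\rho\circ\phi\,=\,\rho$, so $\phi$ preserves $\ker\rho$ and therefore descends to an automorphism $\overline{\phi}$ of $N\,=\,\Gamma/\ker\rho$; since $\overline{\phi}$ preserves $Z(N)$, this is the natural $H$--action on $Z$. The computation is then immediate:
\begin{equation*}
\overline{\phi}\big(\rho(\gamma)\big)\,=\,\rho\big(\phi(\gamma)\big)\,=\,\rho(\gamma)\qquad\text{for every }\gamma\,\in\,\Gamma,
\end{equation*}
so that $\overline{\phi}\,=\,\mathrm{id}_N$, and in particular $\overline{\phi}$ restricts to the identity on $Z$. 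As this holds for all $\phi\,\in\,H$, the group $H$ acts trivially on $Z$, whence $Z\,=\,Z_H$ by the reduction of the first paragraph.

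The hard part, such as it is, will be only the well--definedness of the action: checking that $\rho\circ\phi\,=\,\rho$ forces $\phi$ to preserve $\ker\rho$; once this is granted, the triviality is automatic. I would also record, for use downstream, why this innocuous equality is the relevant one. By Lemma~\ref{finabelian} the quotient $N/Z$ is finite, so $Z$ carries all of the potential infinitude of $\rho(\Gamma)$; and the present lemma identifies $Z\,=\,Z_H$ as a quotient of the co--invariant module $H_1(\Gamma_1,\,\Z)_H$ of the finite--index subgroup $\Gamma_1\,=\,\rho^{-1}(Z)$, on which $\rho$ is $H$--equivariant with trivial target action. Feeding this into Corollary~\ref{cor:finite index} will then bound $Z$, and hence $\rho(\Gamma)$, in the main argument; but that step lies beyond the statement at hand.
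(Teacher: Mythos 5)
Your proof is correct and takes essentially the same approach as the paper: the paper's one--line proof likewise observes that $H=\mathrm{Stab}(\rho)$ acts trivially on $\rho(\Gamma)$, hence on its center, so every relator of the form $\phi(z)-z$ vanishes and the co--invariants coincide with the center itself. Your write--up merely makes explicit the detail the paper leaves implicit --- that $\rho\circ\phi=\rho$ forces $\phi$ to preserve $\ker\rho$ and to descend to the identity automorphism of $\rho(\Gamma)\cong\Gamma/\ker\rho$ --- which is a welcome clarification but not a different argument.
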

\begin{proof}
This is immediate, since $\rho(\Gamma)$ is invariant under the action of $H$. Thus, if $z$ lies in the center of $\rho(\Gamma)$ then $\phi(z)=z$ for all $\phi\in H$. In particular, the subgroup of the center of $\rho(\Gamma)$ generated by elements of the form $\phi(z)-z$ is trivial.
\end{proof}

\subsection{Homology of finite index subgroups}

Let $\Gamma$ be a finitely generated group, and let $\rho\,\in\,\RR(\Gamma, G)$ be a representation 
whose orbit under the action of $\Aut(\Gamma)$ is finite. By Lemma \ref{finabelian}, we have 
that $\rho(\Gamma)$ fits into a central extension: \[1\longrightarrow 
Z\longrightarrow \rho(\Gamma)\longrightarrow F\longrightarrow 1\, ,\] where $F$ is a finite group 
and $Z$ is a finitely generated torsion--free abelian group lying in the center of 
$\rho(\Gamma)$.

Consider the 
group $N\,=\,\rho^{-1}(Z)\,<\, \Gamma$. This is a finite index subgroup of $\Gamma$, since $Z$ has 
finite index in $\rho(\Gamma)$. By replacing $N$ by a further finite index subgroup of $\Gamma$ if 
necessary, we may assume that $N$ is characteristic in $\Gamma$ and hence $N$ is invariant under 
automorphisms of $\Gamma$.

Since $Z$ is an abelian group, we have that the restriction of $\rho$ to $N$ factors 
through the abelianization $H_1(N,\,\Z)$. As before, we write $\rho^{ab}\,\colon\, 
H_1(N,\Z)\,\longrightarrow \,Z$ for the corresponding map, and we write $Q=\Gamma/N$. The group 
$\Gamma$ acts by conjugation on $N$ and on $H_1(N,\,\Z)$, and on $Z$ via $\rho$, thus turning both $H_1(N,\,\Z)$ and $Z$ into $\Z[\Gamma]$--modules. Observe that the $\Gamma$--action on $H_1(N,\,\Z)$ turns 
this group into a $\Z[Q]$--module, and that the $\Z[\Gamma]$--module structure on $Z$ is 
trivial. Note that the map $\rho^{ab}$ is a homomorphism of $\Z[\Gamma]$--modules. Summarizing 
the previous discussion, we have that following diagram commutes $\Gamma$--equivariantly:
$$\xymatrix{
 N \ar[d]^{\rho} \ar[r]^{A} & H_1(N,\Z) \ar[dl]^{\rho^{ab}}\\
 Z &
}$$

\subsection{Chevalley--Weil Theory}

Let $\Gamma$ be a group, and let $N\,<\,\Gamma$ be a finite index normal subgroup with quotient group 
\begin{equation}\label{dQ}
Q\,:=\,\Gamma/N\, .
\end{equation}
When $\Gamma$ is a closed surface group or a finitely generated free group, it is 
possible to describe $H_1(N,\,\Q)$ as a $\Q[Q]$--module. We address closed surface groups 
first:

\begin{theorem}[Chevalley--Weil Theory for surface groups,~\cite{cw}, see also ~\cite{gllm, Kob2012}]\label{thm:cw1}
Let $S\,=\,S_g$ be a closed surface of genus $g$, and let $\Gamma\,=\,\pi_1(S)$. Then
there is an isomorphism of $\Q[Q]$--modules (defined in \eqref{dQ})
\[H_1(N,\Q)\,\stackrel{\sim}{\longrightarrow}\, \rho_{reg}^{2g-2}\oplus \rho_0^2\, ,\]
where $\rho_{reg}$ is the regular representation of $Q$ and $\rho_0$ is the trivial representation
of $Q$. Moreover, the invariant subspace of $H_1(N,\,\Q)$ is $\Aut(\Gamma)$--equivariantly isomorphic
to $H_1(\Gamma,\,\Q)$ via the transfer map.
\end{theorem}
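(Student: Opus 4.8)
The plan is to pass to the covering-space picture and compute the $Q$--character of $H_1(N,\Q)$ by a Lefschetz fixed-point argument. Since $N\,\triangleleft\,\Gamma\,=\,\pi_1(S)$ has finite index, it corresponds to a regular covering $p\colon \widetilde S\,\longrightarrow\, S$ whose deck transformation group is $Q\,=\,\Gamma/N$ and with $\pi_1(\widetilde S)\,=\,N$; here $\widetilde S$ is again a closed orientable surface, so that $H_1(N,\Q)\,=\,H_1(\widetilde S,\Q)$ carries the $\Q[Q]$--module structure induced by the deck action. Writing $d\,=\,|Q|$, multiplicativity of the Euler characteristic in finite covers gives $2-2\widetilde g\,=\,d(2-2g)$, so $\dim_\Q H_1(\widetilde S,\Q)\,=\,2\widetilde g\,=\,2d(g-1)+2$, which already matches $\dim_\Q(\rho_{reg}^{2g-2}\oplus \rho_0^2)$.

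To identify the module, I would compute, for each $q\,\in\, Q$, the trace $\chi_1(q)$ of $q_*$ on $H_1(\widetilde S,\Q)$. For $q\,\neq\, 1$ the deck transformation acts freely, so its Lefschetz number vanishes; since $q$ acts trivially on $H_0(\widetilde S,\Q)$ and on $H_2(\widetilde S,\Q)$ (the covering being orientation preserving), the Lefschetz number equals $1-\chi_1(q)+1$, forcing $\chi_1(q)\,=\,2$ for all $q\,\neq\, 1$, while $\chi_1(1)\,=\,2\widetilde g$. These are exactly the character values of $\rho_{reg}^{2g-2}\oplus \rho_0^2$, whose character sends $1$ to $(2g-2)d+2$ and each $q\,\neq\, 1$ to $2$. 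Since a representation of a finite group over a field of characteristic zero is determined up to isomorphism by its character, this yields the claimed isomorphism of $\Q[Q]$--modules.

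For the second assertion I would use the transfer. The covering $p$ induces $p_*\colon H_1(\widetilde S,\Q)\,\longrightarrow\, H_1(S,\Q)$ and a transfer $\tau\colon H_1(S,\Q)\,\longrightarrow\, H_1(\widetilde S,\Q)$ satisfying $p_*\circ \tau\,=\,d\cdot \mathrm{id}$ and having image inside the $Q$--invariants $H_1(\widetilde S,\Q)^Q$. Because $p_*\circ\tau$ is an isomorphism, $\tau$ is injective; and since the invariants are precisely the $\rho_0$--isotypic part of the decomposition above, of dimension $2g\,=\,\dim_\Q H_1(\Gamma,\Q)$, the injective map $\tau$ carries $H_1(S,\Q)\,=\,H_1(\Gamma,\Q)$ isomorphically onto $H_1(\widetilde S,\Q)^Q$. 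Finally, since $N$ is characteristic, every $\phi\,\in\,\Aut(\Gamma)$ induces compatible automorphisms of $H_1(N,\Q)$ and of $H_1(\Gamma,\Q)$, and naturality of the transfer makes the resulting square commute, which gives the $\Aut(\Gamma)$--equivariance.

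The step I expect to demand the most care is this last equivariance claim. The subtlety is that $\phi\,\in\,\Aut(\Gamma)$ does not act $\Q[Q]$--linearly on $H_1(N,\Q)$ but only semilinearly, twisting the deck action by the automorphism of $Q$ induced by $\phi$; one must check that the invariant subspace is nonetheless preserved and that the transfer genuinely intertwines the two $\Aut(\Gamma)$--actions, rather than merely the underlying $\Q$--linear maps.
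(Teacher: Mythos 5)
Your proof is correct, and it is worth noting at the outset that the paper itself offers no proof of Theorem \ref{thm:cw1}: the result is quoted from Chevalley--Weil~\cite{cw} with pointers to~\cite{gllm, Kob2012}, and the Lefschetz fixed-point computation you give is precisely the standard modern argument sketched in those references, so there is no internal proof to diverge from. Two points in your write-up deserve to be pinned down explicitly. First, the $\Q[Q]$--module structure in the paper arises from the conjugation action of $\Gamma$ on $N$, whereas you use the deck action on $H_1(\widetilde S,\Q)$; these agree because inner automorphisms of $N$ act trivially on $H_1(N,\Q)$, so the outer action of $Q$ is well defined, and any residual $q\leftrightarrow q^{-1}$ discrepancy of conventions is invisible to your argument since the computed character ($2\widetilde g$ at the identity, $2$ at every $q\neq 1$) is inversion-invariant. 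Second, the step ``a character-zero representation is determined by its character'' needs a word over $\Q$ rather than $\C$: equality of characters gives an isomorphism over $\overline{\Q}$ by semisimplicity, and the Noether--Deuring theorem then descends it to $\Q$; since both modules are visibly defined over $\Q$, this is legitimate. Your final paragraph correctly isolates the one genuine subtlety in the equivariance claim --- an automorphism $\phi\in\Aut(\Gamma)$ acts only semilinearly on $H_1(N,\Q)$, twisting the $Q$--action by the automorphism of $Q$ that $\phi$ induces --- and your resolution is sound: a semilinear map still preserves the full invariant subspace (if $v$ is $Q$--invariant, then $\phi(v)$ is invariant under $\phi(Q)=Q$), and the transfer, defined at chain level by summing over coset representatives of $N$ in $\Gamma$, is natural for automorphisms satisfying $\phi(N)=N$ because such $\phi$ permutes the cosets. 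With those three remarks made explicit, the argument is a complete and self-contained proof of the quoted theorem, including the dimension count $\dim H_1(\widetilde S,\Q)^Q=2g$ matching $\dim H_1(\Gamma,\Q)$ that makes the injective transfer an isomorphism onto the invariants.
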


The corresponding statement for finitely generated free groups was also observed by Gasch\"utz, and is identical to the statement for surface groups, \emph{mutatis mutandis}:

\begin{theorem}[Chevalley--Weil Theory for free groups,~\cite{cw}, see also ~\cite{gllm, Kob2012}]\label{thm:cw2}
Let $\Gamma\,=\,F_n$ be a free group of rank $n$. Then there is an isomorphism of $\Q[Q]$--modules
\[H_1(N,\Q)\,\stackrel{\sim}{\longrightarrow}\, \rho_{reg}^{n-1}\oplus \rho_0\, ,\] where
$\rho_{reg}$ is the regular representation of $Q$ and $\rho_0$ is the trivial representation of
$Q$. Moreover, the invariant subspace of $H_1(N,\,\Q)$ is $\Aut(\Gamma)$--equivariantly isomorphic to
$H_1(\Gamma,\,\Q)$ via the transfer map.
\end{theorem}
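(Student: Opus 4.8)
The plan is to realize the computation topologically and read the module structure off a $Q$-equivariant cellular chain complex, which is the classical Gasch\"utz argument. Realize $\Gamma = F_n$ as $\pi_1(R_n)$, where $R_n$ is a wedge of $n$ circles regarded as a graph with one vertex and $n$ edges. The finite index normal subgroup $N$ corresponds to a connected regular cover $p\colon X \longrightarrow R_n$ whose group of deck transformations is $Q = \Gamma/N$; regularity means $Q$ acts freely on the graph $X$ with quotient $R_n$, and $H_1(N,\Q) \cong H_1(X,\Q)$.

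First I would identify the cellular chains of $X$ as $\Q[Q]$--modules. The vertices of $X$ form a single free $Q$--orbit (they biject with the cosets $\Gamma/N$) and the edges of $X$ form $n$ free $Q$--orbits (one over each edge of $R_n$), so that $C_0(X,\Q) \cong \rho_{reg}$ and $C_1(X,\Q) \cong \rho_{reg}^n$. The boundary $\partial\colon C_1 \to C_0$ is $Q$--equivariant; since $X$ is connected, $H_0(X,\Q) = \operatorname{coker}\partial \cong \rho_0$ and $H_1(X,\Q) = \ker\partial$. This produces a four--term exact sequence of $\Q[Q]$--modules
\[
0 \longrightarrow H_1(X,\Q) \longrightarrow \rho_{reg}^n \stackrel{\partial}{\longrightarrow} \rho_{reg} \longrightarrow \rho_0 \longrightarrow 0 .
\]

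Next I would apply Maschke's theorem: as $Q$ is finite, $\Q[Q]$ is semisimple, so a module is determined up to isomorphism by its class in the Grothendieck group. Since the alternating sum of classes along an exact sequence vanishes, $[H_1(X,\Q)] = n[\rho_{reg}] - [\rho_{reg}] + [\rho_0] = (n-1)[\rho_{reg}] + [\rho_0]$, and semisimplicity promotes this virtual identity to a genuine isomorphism $H_1(N,\Q) \cong \rho_{reg}^{n-1} \oplus \rho_0$.

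For the final assertion I would study the transfer map $\mathrm{tr}\colon H_1(\Gamma,\Q) \to H_1(N,\Q)$. Because $p_* \circ \mathrm{tr}$ equals multiplication by $[\Gamma:N] = |Q|$, the transfer is injective over $\Q$; and since $\mathrm{tr}$ sums over the lifts of a cycle, its image is fixed by $Q$, hence contained in $H_1(N,\Q)^Q$. From the decomposition just obtained the invariants have dimension $(n-1)\cdot 1 + 1 = n = \dim H_1(\Gamma,\Q)$ (each copy of the regular representation contributes a one--dimensional invariant subspace), so $\mathrm{tr}$ is an isomorphism onto $H_1(N,\Q)^Q$. Equivariance under the automorphisms of $\Gamma$ preserving $N$ (all of $\Aut(\Gamma)$ once $N$ is taken characteristic) then follows from the naturality of the transfer. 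I expect the decomposition itself to be essentially forced once the chain complex is written down; the step requiring genuine care is this last one, namely checking that the natural identification of $H_1(\Gamma,\Q)$ with the invariant subspace is realized by the transfer and respects the $\Aut(\Gamma)$--action, since that is precisely what the arguments in the preceding sections invoke.
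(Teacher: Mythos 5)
The paper does not prove this theorem: it is stated as a classical result with citations to Chevalley--Weil, Grunewald--Larsen--Lubotzky--Malestein, and Koberda, so there is no internal proof to compare against, and the relevant comparison is with the standard Gasch\"utz argument that those references contain. Your proposal is exactly that argument, and it is correct. Realizing $\Gamma$ as $\pi_1$ of the rose $R_n$, the cover $X$ corresponding to the normal subgroup $N$ has vertex set a single free $Q$--orbit and edge set $n$ free $Q$--orbits (freeness of the deck action rules out both edge stabilizers and edge inversions, since an inverted edge would have a fixed midpoint), giving $C_0 \cong \rho_{reg}$ and $C_1 \cong \rho_{reg}^n$; your four--term exact sequence, together with semisimplicity of $\Q[Q]$ via Maschke, then forces $H_1(N,\Q) \cong \rho_{reg}^{n-1}\oplus\rho_0$, since over a semisimple ring the class in the Grothendieck group determines the module. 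The transfer step is also handled correctly: injectivity from $p_*\circ\mathrm{tr} = |Q|\cdot\mathrm{id}$ in characteristic zero, the image landing in $H_1(N,\Q)^Q$, and the dimension count $(n-1)+1 = n = \dim H_1(\Gamma,\Q)$ (one invariant line per regular summand) together show $\mathrm{tr}$ is an isomorphism onto the invariants, with $\Aut(\Gamma)$--equivariance following from naturality of the transfer once $N$ is taken characteristic --- which is precisely the form in which the paper's proof of Theorem \ref{intromain} uses the statement. The same chain--level computation, with $2g$ edges, one vertex, and a single $2$--cell, yields the surface case of Theorem \ref{thm:cw1}, so your method also recovers the companion statement.
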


Tensoring with $\Q$, we have a map \[\rho^{ab}\otimes\Q\colon H_1(N,\Q)\longrightarrow 
Z\otimes\Q\] which is a homomorphism of $\Q[\Gamma]$--modules since the natural map $\rho^{ab}\colon H_1(N,\Z)\longrightarrow Z$ is $\Gamma$--equivariant. We decompose $H_1(N,\Q)=V_0\bigoplus 
\big(\oplus_{\chi} V_{\chi}\big)$ according to its structure as a $\Q[\Gamma]$--module, where 
$V_0$ is the invariant subspace and $\chi$ ranges over nontrivial irreducible characters of 
$Q$.

Note that since $N$ is characteristic in $\Gamma$, the group $\Aut(\Gamma)$ acts on 
$H_1(N,\Q)$ and this action preserves $V_0$. Moreover, Theorems \ref{thm:cw1} and 
\ref{thm:cw2} imply that the $\Aut(\Gamma)$--action on $V_0$ is canonically isomorphic to the $\Aut(\Gamma)$ action on 
$H_1(\Gamma,\,\Q)$, by the naturality of the transfer map.

We are now ready to prove the main result of this note:

\begin{proof}[Proof of Theorem \ref{intromain}]
By the discussion above, it suffices to prove that the group $Z$ is finite, or equivalently that 
the vector space $Z\otimes\Q$ is trivial.

Considering the image of each irreducible representation $V_{\chi}$ under 
$\rho^{ab}\otimes\Q$, from Schur's Lemma it is deduced that either $V_{\chi}$ is in the kernel of 
$\rho^{ab}\otimes\Q$ or it is mapped isomorphically onto its image. Since 
$\rho^{ab}\otimes\Q$ is a $\Q[\Gamma]$--module homomorphism and since $Z\otimes\Q$ is a 
trivial $\Q[\Gamma]$--module, we have that $V_{\chi}\,\subset\,\ker\rho^{ab}\otimes\Q$ whenever 
$\chi$ is a nontrivial irreducible character of $Q$. It follows that $Z\otimes\Q$ is a 
quotient of $V_0$.

Since the $\Aut(\Gamma)$--actions on $H_1(\Gamma,\,\Z)$ and on $V_0$ are isomorphic, Corollary 
\ref{cor:finite index} implies that the module of rational $H$--co--invariants for $V_0$ 
is trivial for any finite index subgroup $H\,<\,\Aut(\Gamma)$, meaning \[V_0/\langle \phi(v)-v\,\mid\, 
v\,\in\, V_0\textrm{ and }\phi\,\in\, H\rangle=0.\]

Let $H\,=\,\mathrm{Stab}(\rho)\,<\,\Aut(\Gamma)$ be the stabilizer of $\rho$, which has finite index 
in $\Aut(\Gamma)$ by assumption. Since $\rho$ is $H$--invariant, we have that $Z\otimes\Q$ is 
also $H$--invariant. Let $v\in V_0$ be an element which does not lie in the kernel of 
$\rho^{ab}\otimes\Q$. Since the module of $H$--co--invariants of $V_0$ is trivial, we 
have that \[v_0=\sum_{i=1}^k a_i(\phi(v_i)-v_i)\] for suitable 
vectors $(v_1,\,\cdots,\,v_k)\,\in\, V^k_0$, rational numbers $(a_1,\,\cdots,\,a_k)\,\in\,{\Q}^k$, 
and automorphisms $(\phi_1,\,\cdots,\,\phi_k)\,\in\, H^k$. Applying $\rho^{ab}\otimes\Q$, we 
have \[(\rho^{ab}\otimes\Q)(v_0)=\sum_{i=1}^k a_i\cdot 
(\rho^{ab}\otimes\Q)(\phi(v_i)-v_i).\] Since $\rho$ and $Z$ are both $H$--invariant, we 
have that $(\rho^{ab}\otimes\Q)(\phi(v_i)-v_i)=0$, whence $(\rho^{ab}\otimes\Q)(v_0)=0$. Thus, $v_0\in\ker\rho^{ab}\otimes\Q$, and consequently 
$Z\otimes\Q=0$.
\end{proof}

\section{Counterexamples for general groups}

It is not difficult to see that Theorem \ref{intromain} is false for general groups. We 
have the following easy proposition:

\begin{prop}\label{prop:counterexample}
Let $\Gamma$ be a finitely generated group such that $\Gamma$ surjects to $\Z$ and such that $\Out(\Gamma)$ is
finite. Then there exists a group $G$ and a representation $\rho\,\in\,\RR(\Gamma, G)$ such
that $\rho$ has infinite image and such that the $\Aut(\Gamma)$--orbit of $\rho$ is finite.
\end{prop}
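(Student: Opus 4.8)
The plan is to exhibit the simplest possible witness: take $G\,=\,\Z$ and let $\rho\,\colon\,\Gamma\,\longrightarrow\,\Z$ be the given surjection. Its image is all of $\Z$, and hence infinite, so the only thing that requires checking is that the $\Aut(\Gamma)$--orbit of $\rho$ in $\RR(\Gamma,\Z)\,=\,\Hom(\Gamma,\Z)$ is finite. The entire content of the proposition therefore lies in extracting finiteness of the orbit from the two hypotheses: the surjection onto $\Z$ supplies the infinite image, while the finiteness of $\Out(\Gamma)$ must be used to control the orbit.

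First I would record that $\rho$, being a homomorphism into the abelian group $\Z$, factors through the abelianization, say $\rho\,=\,\bar\rho\circ A$ with $A\,\colon\,\Gamma\,\longrightarrow\, H_1(\Gamma,\,\Z)$. The crucial observation is then that the $\Aut(\Gamma)$--action on $\Hom(\Gamma,\Z)$ factors through the finite group $\Out(\Gamma)$. Indeed, exactly as in the argument of Lemma \ref{abelian}, for any inner automorphism given by conjugation by $g$ and any $h\,\in\,\Gamma$ we have $\rho(ghg^{-1})\,=\,\rho(g)\rho(h)\rho(g)^{-1}\,=\,\rho(h)$, since $\Z$ is abelian. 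Thus $\Inn(\Gamma)$ acts trivially on every abelian--valued homomorphism, and the residual $\Aut(\Gamma)$--action on $\Hom(\Gamma,\Z)$ is through $\Out(\Gamma)$.

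Finally, since $\Out(\Gamma)$ is assumed to be finite, the orbit of $\rho$ under $\Out(\Gamma)$, and therefore under $\Aut(\Gamma)$, is finite, which completes the argument. The main ``obstacle'' here is conceptual rather than technical: recognizing that no genuine construction of $G$ is needed beyond the given surjection, and that the two hypotheses conspire so cleanly — one furnishing infinite image and the other forcing finiteness of the orbit precisely because homomorphisms into an abelian target are insensitive to inner automorphisms. It is worth noting in passing that $\Gamma\,=\,\pi_1(S)$ and $\Gamma\,=\,F_n$ both fail the hypothesis $\Out(\Gamma)$ finite, so this proposition genuinely lives outside the regime of Theorem \ref{intromain}.
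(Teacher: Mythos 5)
Your proof is correct and follows essentially the same route as the paper: both arguments hinge on the observation that inner automorphisms act trivially on homomorphisms into an abelian target, so the $\Aut(\Gamma)$--action factors through the finite group $\Out(\Gamma)$. The only (cosmetic) difference is your choice of witness --- you take $G\,=\,\Z$ with the given surjection, while the paper takes $G\,=\,\Gamma^{ab}$ with the abelianization map, using the surjection onto $\Z$ only to guarantee that the image is infinite.
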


\begin{proof}
Set $$G\,=\,\Gamma^{ab}\, ,$$ and let $\rho\, :\, \Gamma\,\longrightarrow\,
G$ be the abelianization map. Since $\Out(\Gamma)$ is finite, we have
that $\Aut(\Gamma)$ induces only finitely many distinct automorphisms of $G$, and hence $\rho$ has
a finite orbit under the $\Aut(\Gamma)$ action on $\rho\in\RR(\Gamma,G)$.
\end{proof}

It is easy to see that Proposition \ref{prop:counterexample} generalizes to the case 
where $\rho$ has infinite abelian image with $G$ being an arbitrary group. 

There are many natural classes of groups which satisfy the hypotheses of Proposition 
\ref{prop:counterexample}. For instance, one can take a cusped finite volume hyperbolic 
$3$--manifold or a closed hyperbolic $3$--manifold with positive first Betti number; 
every closed hyperbolic $3$--manifold has such a finite cover by the work of 
Agol~\cite{A}. The fundamental groups of these manifolds are finitely generated with 
infinite abelianization, and by Mostow Rigidity, their groups of outer automorphisms are 
finite.

Another natural class of groups satisfying the hypotheses of Proposition 
\ref{prop:counterexample} is the class of random right-angled Artin groups, in the sense 
of Charney--Farber~\cite{CF}. Every right-angled Artin group has infinite abelianization, 
though many have infinite groups of outer automorphisms. Certain graph theoretic 
conditions which are satisfied by finite graphs in a suitable random model guarantee that 
the outer automorphism group is finite, however. An explicit right-angled Artin group 
with a finite group of outer automorphisms is the right-angled Artin group on the 
pentagon graph.

Let $D_n$ denote the disk with $n$ punctures. The mapping class group $\Mod(D_n,\partial 
D_n)$ is identified with the braid group $B_n$ on $n$ strands, and naturally sits inside 
of $\Aut(F_n)\,=\,\Aut(\pi_1(D_n))$. The following easy proposition illustrates another 
failure of Theorem \ref{intromain} to generalize:

\begin{prop}\label{prop:braid}
Let $G$ be a group which contains an element of infinite order. Then there exists an infinite
image representation $\rho\,\in\,\RR(F_n,G)$ which is fixed by the action of $B_n\,<\,\Aut(F_n)$.
\end{prop}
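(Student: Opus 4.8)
The plan is to exhibit an explicit representation rather than argue existence abstractly. First I would recall the Artin action of $B_n\,<\,\Aut(F_n)$ on the free group $F_n\,=\,\langle x_1,\,\cdots,\,x_n\rangle\,=\,\pi_1(D_n)$, where $x_i$ is the loop encircling the $i$-th puncture. With respect to the standard generators $\sigma_1,\,\cdots,\,\sigma_{n-1}$ of $B_n$, this action is given by $\sigma_i(x_i)\,=\,x_ix_{i+1}x_i^{-1}$, by $\sigma_i(x_{i+1})\,=\,x_i$, and by $\sigma_i(x_j)\,=\,x_j$ for $j\,\neq\, i,\,i+1$.

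Next I would determine precisely what $B_n$--invariance forces. A representation $\rho\,\in\,\RR(F_n,G)$ is fixed by $B_n$ exactly when $\rho\,\circ\,\sigma_i\,=\,\rho$ for every $i$. Reading off the relation $\sigma_i(x_{i+1})\,=\,x_i$, invariance immediately yields $\rho(x_{i+1})\,=\,\rho(x_i)$ for all $i$, so that every generator must be sent to one common element $g\,=\,\rho(x_1)\,=\,\cdots\,=\,\rho(x_n)$ of $G$. The remaining relation $\sigma_i(x_i)\,=\,x_ix_{i+1}x_i^{-1}$ is then automatically respected, since $\rho(x_ix_{i+1}x_i^{-1})\,=\,g g g^{-1}\,=\,g\,=\,\rho(x_i)$. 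Thus $B_n$--invariant representations are exactly the ``diagonal'' ones.

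This identifies the construction: let $t\,\in\, G$ be an element of infinite order, and define $\rho\,\colon\, F_n\,\longrightarrow\, G$ by $\rho(x_i)\,=\,t$ for every $i$. Since $F_n$ is free this is a well--defined homomorphism, its image is the infinite cyclic group $\langle t\rangle$, and by the computation above it is fixed by every $\sigma_i$ and hence by all of $B_n$. Conceptually, $\rho$ factors as the total--exponent--sum map $F_n\,\longrightarrow\,\Z$ followed by $1\,\mapsto\, t$; on $H_1(F_n,\,\Z)\,=\,\Z^n$ the braid group acts through the symmetric group $S_n$ by permuting coordinates, and the summation map is manifestly $S_n$--invariant, which explains the invariance without any computation. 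I expect essentially no obstacle here beyond recording the Artin action correctly: the only real point is the observation that the relation $\sigma_i(x_{i+1})\,=\,x_i$ pins down the invariant representations to the diagonal assignments, after which infinitude of the image is immediate from the hypothesis on $G$.
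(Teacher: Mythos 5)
Your construction is exactly the paper's: the representation factoring through the total exponent sum $F_n\longrightarrow\Z$ followed by sending a generator to an infinite-order element, with invariance coming from the fact that $B_n$ acts on the standard generators (up to conjugacy) by permutation. Your explicit verification via the Artin formulas $\sigma_i(x_i)=x_ix_{i+1}x_i^{-1}$, $\sigma_i(x_{i+1})=x_i$ is correct, and your added observation that $B_n$--invariance in fact \emph{forces} all generators to have a common image is a valid strengthening, showing the diagonal representations are the only invariant ones.
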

\begin{proof}
Small loops about the punctures of $D_n$ can be connected to a base-point on the boundary of $D_n$ in order to obtain a free basis for $\pi_1(D_n)$. Since the braid group consists of isotopy classes of homeomorphisms of $D_n$, we have that $B_n$ acts on the homology classes of these loops by permuting them. Therefore, we may let $\rho$ be the homomorphism $F_n\longrightarrow\Z$
obtained by taking the exponent sum of a word in the chosen free basis for $\pi_1(D_n)$, and then
sending a generator for $\Z$ to an infinite order element of $G$. It is clear from this
construction that $\rho$ is $B_n$--invariant and has infinite image.
\end{proof}

\section{Character varieties}

In this section we prove Proposition \ref{prop:character}, which relies on one of the 
results in~\cite{KS}. Recall that if $S$ is an orientable surface with negative Euler 
characteristic then the Birman Exact Sequence furnishes a normal copy of $\pi_1(S)$ 
inside of the pointed mapping class group $\Mod(S,\ast)$ 
(see~\cite{Birman,FarbMargalit}).

\begin{theorem}[cf. ~\cite{KS}, Corollary 4.3]\label{thm:KS}
There exists a linear representation $\rho\,\colon\,\Mod(S,\ast)\,\longrightarrow\,\PGL_n(\C)$ such
that the restriction of $\rho$ to $\pi_1(S)$ has infinite image.
\end{theorem}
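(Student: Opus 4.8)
The plan is to produce $\rho$ from the Witten--Reshetikhin--Turaev (WRT) quantum representations of the mapping class group. For a fixed root of unity (say an odd prime level $p$ in the $\mathrm{SO}(3)$ theory) and a fixed coloring $c$ of the marked point $\ast$, the TQFT assigns to $(S,\ast)$ a finite--dimensional $\C$--vector space $V$ of some dimension $n$ together with a \emph{projective} linear action of $\Mod(S,\ast)$, that is, a homomorphism $\rho\colon\Mod(S,\ast)\longrightarrow\PGL_n(\C)=\PGL(V)$. Since $\pi_1(S)$ embeds in $\Mod(S,\ast)$ as the point--pushing subgroup via the Birman exact sequence, it therefore suffices to exhibit a single level, a single coloring, and a single point--pushing element whose image under $\rho$ has infinite order; this forces the restriction $\rho|_{\pi_1(S)}$ to have infinite image.

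First I would recall the description of point--pushing in terms of Dehn twists. If $\gamma$ is an essential based simple loop on $S$ and $P(\gamma)\in\pi_1(S)<\Mod(S,\ast)$ denotes the associated point--push, then $P(\gamma)=T_{\gamma_1}T_{\gamma_2}^{-1}$, where $\gamma_1$ and $\gamma_2$ are the two boundary curves of an annular neighborhood of $\gamma$ containing $\ast$. In the unpointed surface $\gamma_1$ and $\gamma_2$ are isotopic, so their twists cancel; the nontriviality of $\rho(P(\gamma))$ comes precisely from the fact that in the pointed theory the marked point carries a nontrivial color $c$ separating $\gamma_1$ from $\gamma_2$.

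Next I would use the standard fact that in the WRT TQFT a Dehn twist $T_\delta$ acts diagonalizably, with eigenvalues of the form $\lambda\cdot\mu_a$, where $\lambda$ is a global scalar and $\mu_a$ is the twist coefficient attached to the color $a$ of $\delta$ (a root of unity coming from the framing anomaly). Choosing a basis of $V$ adapted to a pants decomposition in which $\gamma_1$ and $\gamma_2$ are decomposition curves, I would compute the matrix of $\rho(P(\gamma))$ explicitly on a well--chosen vector, equivalently compute a ratio of eigenvalues of $T_{\gamma_1}T_{\gamma_2}^{-1}$. The aim is to show that this ratio is \emph{not} a root of unity for a suitable level $p$ and coloring $c$, which makes $\rho(P(\gamma))$ an infinite--order element of $\PGL_n(\C)$.

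The hard part is exactly this last arithmetic step: proving that some eigenvalue ratio of $\rho(P(\gamma))$ fails to be a root of unity. One clean route is to let $p\to\infty$ and invoke Andersen's asymptotic faithfulness, which guarantees that for large $p$ the quantum representations separate enough of $\Mod(S,\ast)$ to force infinite image on the point--pushing subgroup; alternatively one can argue directly, expressing $\rho(P(\gamma))$ through quantum integers $[k]_q=(q^k-q^{-k})/(q-q^{-1})$ and checking that the relevant product of twist coefficients has infinite multiplicative order modulo scalars. Either way, once a single infinite--order image element is located, setting $n=\dim V$ and taking $\rho$ to be the corresponding projective representation completes the construction.
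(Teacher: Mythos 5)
You should first note that the paper does not actually prove this statement: it is quoted verbatim from \cite{KS}, Corollary 4.3, so your proposal is measured against the proof in that reference. Your general framework matches that source correctly --- the SO(3) quantum representations at a root of unity, the Birman exact sequence realizing $\pi_1(S)$ as the point--pushing subgroup of $\Mod(S,\ast)$, and the formula $P(\gamma)=T_{\gamma_1}T_{\gamma_2}^{-1}$ for the push along a simple loop $\gamma$ are all exactly the right setting.

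However, your central step is not merely hard but provably impossible. The curves $\gamma_1$ and $\gamma_2$ are disjoint, so $T_{\gamma_1}$ and $T_{\gamma_2}$ commute and are \emph{simultaneously} diagonalizable in a basis adapted to a pants decomposition containing both curves; each acts with eigenvalues of the form (global scalar)$\,\cdot\,\mu_a$, where the twist coefficients $\mu_a=(-1)^aA^{a(a+2)}$ are roots of unity once the level $p$ (equivalently the root of unity $A$) is fixed. Consequently every eigenvalue ratio of $T_{\gamma_1}T_{\gamma_2}^{-1}$ is a ratio of roots of unity, hence a root of unity, and $\rho(P(\gamma))$ is a torsion element of $\PGL_n(\C)$ (of order dividing roughly $2p$) for \emph{every} simple loop $\gamma$. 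This is in fact the whole point of \cite{KS}: images of simple loops there have uniformly bounded finite order, and the infinite image is detected on non--simple elements --- e.g.\ products of pushes along intersecting simple loops, which are products of non--commuting torsion elements --- via explicit curve--operator and trace computations together with a number--theoretic criterion certifying that some eigenvalue of such a product is not a root of unity. So no choice of level, coloring, and single simple push can complete your argument; the same objection kills your alternative of showing that ``the relevant product of twist coefficients has infinite multiplicative order,'' since products of roots of unity are roots of unity. Your other fallback, Andersen's asymptotic faithfulness, also does not suffice: it asserts that the intersection of the kernels over all levels is trivial (up to the hyperelliptic involution), which is entirely compatible with the image of $\pi_1(S)$ being finite at every fixed level --- a residually finite group such as $\pi_1(S)$ is already separated by finite--image representations --- whereas the theorem requires infinite image at a single level.
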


We remark that in Theorem \ref{thm:KS}, it can be arranged for the image of $\pi_1(S)$ 
under $\rho$ to have a free group in its image, as discussion in~\cite{KS}. Theorem \ref{thm:KS} implies Proposition 
\ref{prop:character} without much difficulty.

\begin{proof}[Proof of Proposition \ref{prop:character}]
Let a representation $\sigma\,\colon\,\Mod(S,\ast)\,\longrightarrow\,\PGL_n(\C)$ be given as
in Theorem \ref{thm:KS}. Choose an arbitrary embedding of $\PGL_n(\C)$ into $\GL_m(\C)$ for
some $m\,\geq \,n$, and let $\rho$ be the corresponding representation of $\Mod(S,\ast)$ obtained
by composing $\sigma$ with the embedding. We will write $\chi$ for its character, and we
claim that this $\chi$ satisfies the conclusions of the proposition.

That $\chi$ corresponds to a representation of $\pi_1(S)$ with infinite image is 
immediate from the construction. Note that $\chi$ is actually the character of a 
representation of $\Mod(S,\ast)$, and that $\Inn(\Mod(S,\ast))$ acts on 
$\RX(\Mod(S,\ast))$ trivially. It follows that $\Inn(\Mod(S,\ast))$ fixes $\chi$ even 
when $\chi$ is viewed as a character of $\pi_1(S)$, since $$\pi_1(S)\,<\,\Mod(S,\ast)$$ is 
normal. The conjugation action of $\Mod(S,\ast)$ on $\pi_1(S)$ is by automorphisms via 
the natural embedding $$\Mod(S,\ast)<\Aut(\pi_1(S))\, .$$ It follows that $\chi$ is invariant 
under the action of $\Mod(S,\ast)$, the desired conclusion. \end{proof}
		
\section*{Acknowledgements}

The authors thank B. Farb for many comments which improved the paper. IB and MM acknowledge support of their respective J. C. Bose Fellowships. TK is partially 
supported by Simons Foundation Collaboration Grant number 429836.

\end{document}